\newtheorem{theorem}{\bf  Theorem}
\newtheorem{lemma}{\bf  Lemma}
\newtheorem{remark}{ \sc Remark}
\begin{document}

\begin{center}
    \textsf{\textbf{\Large Exponential Speedup of the Janashia-Lagvilava Matrix Spectral Factorization Algorithm}} \\[0.5cm]
    \textbf{Ying Wang}${}^1$,
    \textbf{Lasha Ephremidze}${}^{2,*}$,
     \textbf{Ronaldo García Reyes}${}^1$,
     \textbf{Pedro Valdes-Sosa}${}^{1,*}$\\[0.2cm]
    \textit{${}^1$The Clinical Hospital Chengdu Briain Science Institute \\at the University of Electronic Sciences and Technology of China UESTC\\ ${}^2$  Kutaisi International University, Kutaisi, Georgia} \\[0.05cm]
   {\small
    \textit{${}^*$Corresponding authors:} \textsf{lasha.ephremidze@kiu.edu.ge,  pedro.valdes@neuroinformatics-collaboratory.org}} 
\end{center}


\footnotetext{Ying Wang and Lasha Ephremidze are shared first authors}

\bigskip

{\small{\bf Abstract.} Spectral factorization is a powerful mathematical tool with diverse applications in signal processing and beyond. The Janashia-Lagvilava method has emerged as a leading approach for matrix spectral factorization. In this paper, we extend a central equation of the method to the non-commutative case, enabling polynomial coefficients to be represented in block matrix form while preserving the equation's fundamental structure. This generalization results in an exponential speedup for high-dimensional matrices. Our approach addresses challenges in factorizing massive-dimensional matrices encountered in neural data analysis and other practical applications. }

\vskip+0.2cm

\vskip+0.2cm \noindent  {\small {\bf Mathematics Subject Classification:}  47A68 }	

\vskip+0.2cm\noindent  {\small {\em Keywords: } Matrix spectral factorization, Janashia-Lagvilava algorithm, Exponential speedup}

 \bigskip

\section{Introduction}

Spectral factorization is a fundamental mathematical tool with wide-ranging practical applications in neuroscience, engineering and modern technologies. The method was first introduced in the seminal works of Wiener \cite{Wiener49} and Kolmogorov \cite{Kolm41} and has since been extensively applied to solve various problems, many of which are computationally reduced to such factorizations. However, the lack of efficient computational algorithms for matrix spectral factorization (MSF) remained a major bottleneck, rendering many theoretical advances in multidimensional signal processing and systems infeasible. Therefore, since Wiener’s initial efforts \cite{Wie58} to develop a sound computational method for MSF, numerous algorithms have appeared over the decades (see the survey papers \cite{Kuc}, \cite{SayKai} and references therein). In recent years, the Janashia-Lagvilava method \cite{IEEE}, \cite{IEEE-2018} has emerged as a leading MSF algorithm among its competitors (see, e.g., \cite{Robin}). Indeed, many of its advantages, particularly for the factorization of singular matrices, have been demonstrated in \cite{Ed22}. Nevertheless, the method remained incapable of processing very large-scale matrices within a reasonable time frame, posing a significant challenge for modern applications.

In the present paper, we propose a further theoretical development of the Janashia-Lagvilava method, which exponentially reduces the computational time of the corresponding MSF algorithm, making it possible to process massive-dimensional matrices in real time. This significant improvement was achieved by allowing the coefficients in the primary system of equations within the Janashia-Lagvilava method to be matrices, thereby reformulating these equations in a non-commutative framework. Consequently, instead of incrementally making leading principal submatrices analytic in a step-by-step manner—first $2\times2$, then $3\times3$, $4\times4$, and so on—we now process submatrices starting from the main diagonal and expanding in sizes that double sequentially: $2\times2$, $4\times4$, $8\times8$, $16\times16$, etc. This approach significantly enhances computational efficiency for large-scale matrices and extends the applicability of MSF to fields such as neuroscience and machine learning, which typically involve massive datasets.

The paper is organized as follows: Section 2 compiles the notation used throughout the paper. In Section 3, we mathematically formulate the matrix spectral factorization (MSF) problem. Section 4 introduces the main innovations of the improved method. Sections 5 and 6 provide brief descriptions of the old and new versions of the Janashia-Lagvilava MSF algorithm, respectively. Finally, Section 7 presents numerical simulation results demonstrating significant improvements in computational time.

\section{Notation}

Let $I_M$ be the $M\times M$ identity matrix, and $0_M$ be the zero matrix of the same size. For a set $\mathcal{S}$, let $\mathbf{S}^{M\times M}$ be the set of $M\times M$ matrices with entries from $\mathcal{S}$. For $A=[a_{ij}]\in\mathbb{C}^{r\times r}$, let
$$
|A|_\infty=\max\nolimits_{1\leq i,j\leq r}|a_{ij}|.
$$
For a matrix $A$, we denote by $A^T$ its transpose, and by $A^H=\overline{A}^T$ its Hermitian conjugate. However, if a block matrix form of $A$ is specified,
$$
A=\begin{pmatrix}B_{11}&B_{12}&\ldots B_{1m}\\
B_{21}&B_{22}&\ldots B_{2m}\\
\vdots&\vdots&\ldots\vdots\\
B_{m1}&B_{m2}&\ldots B_{mm}\\
           \end{pmatrix},\;\;\;B_{ij}\in\mathbb{C}^{M\times M},
$$
then we define $A^{T_b}$ and $A^{H_b}$ in a block matrix sense:
\begin{equation}\label{S0}
    A^{T_b}=\begin{pmatrix}B_{11}&B_{21}&\ldots B_{m1}\\
B_{12}&B_{22}&\ldots B_{m2}\\
\vdots&\vdots&\ldots\vdots\\
B_{1m}&B_{2m}&\ldots B_{mm}\\
           \end{pmatrix}
\text{ and }
A^{H_b}=\begin{pmatrix}\overline{B_{11}}&\overline{B_{21}}&\ldots \overline{B_{m1}}\\
\overline{B_{12}}&\overline{B_{22}}&\ldots \overline{B_{m2}}\\
\vdots&\vdots&\ldots\vdots\\
\overline{B_{1m}}&\overline{B_{2m}}&\ldots \overline{B_{mm}}\\
           \end{pmatrix}.
\end{equation}

Note that if we take the transposes of each entries of $A^{T_b}$ and $A^{H_b}$, then we will get $A^T$ and $A^H$, respectively, i.e., if $A=[B_{ij}]$ is a block matrix, then
$$
A^T=[B_{ji}^T]\;\;\text{ and }\;\;A^H=[B_{ji}^H].
$$
Let $\mathcal{P}_{\{-n_1,n_2\}}(M)$, where $n_1,n_2\in\mathbb{N}_0=\mathbb{N}\cup\{0\}$ and $M\in\mathbb{N}$, be the set Laurent matrix polynomials of size $M\times M$:
$$
\mathcal{P}_{\{-n_1,n_2\}}(M):=\left\{ \sum\nolimits_{n=-n_1}^{n_2}A_nz^n: \;A_n\in\mathbb{C}^{M\times M}, n=-n_1, -n_1+1,\ldots, n_2\right\}
$$
and suppose $\mathcal{P}^+_N(M):=\mathcal{P}_{\{0,N\}}(M)$ and 
$\mathcal{P}^-_N(M):=\mathcal{P}_{\{-N,0\}}(M)$. Denote also $\mathcal{P}^+(M)=
\cup_{N=0}^\infty \mathcal{P}^+_N(M)$ and $\mathcal{P}^-(M)=
\cup_{N=0}^\infty \mathcal{P}^-_N(M)$.

For $P(z)=\sum_{n=-n_1}^{n_2}A_nz^n$, where $A_n\in\mathbb{C}^{M\times M}$, let $\widetilde{P}$ be its adjoint:
\begin{equation}\label{1.3}
    \widetilde{P}(z)=  \sum\nolimits_{n=-n_1}^{n_2}A_n^Hz^{-n}.
\end{equation}
This operation satisfies the usual conditions: $\widetilde{P_1+P_2}=\widetilde{P_1}+\widetilde{P_2}$; $\widetilde{P_1P_2}=
\widetilde{P_2}\widetilde{P_1}$; $\widetilde{\widetilde{P}}=P$.

Note that for each $z\in\mathbb{C}$ with $|z|=1$, we have $ \widetilde{P}(z)=(P(z))^H$, and if $P(z)=[p_{ij}(z)]$, $p_{ij}\in \mathcal{P}_{\{-n_1,n_2\}}(M)$, has a block matrix-function form, then
$$
 \widetilde{P}(z)= [\widetilde{p_{ji}}(z)].
$$
Obviously $P\in \mathcal{P}^+(M)$ $\Leftrightarrow$ $\widetilde{P}\in \mathcal{P}^-(M)$, and $P\in  \mathcal{P}^+(M)\cap  \mathcal{P}^-(M)$ $\Leftrightarrow$ 
$P\in\mathbb{C}^{M\times M}$.

A matrix function $U\in \mathcal{P}_{\{-n_1,n_2\}}(N)$ is called para-unitary if
$$
U(z) \widetilde{U}(z)=\widetilde{U}(z) U(z)=I_N, \;\text{ for each } z\in\mathbb{C}\setminus\{0\}.
$$
Note that, in this case, $U(z)$ is usual $N\times N$ unitary matrix for each $z$ with $|z|=1$.

The space of $p$-integrable complex valued functions on the unit circle $\mathbb{T}$ is denoted by $L_p(\mathbb{T})$, $p>0$, and 
 $\mathbb{H}_p(\mathbb{D})$ stands for the Hardy space of analytic functions $f$ in the unit disc $\mathbb{D}$ (see, e.g., \cite{Garn}, \cite{Koosis}). In applied sciences, analytic functions are called {\em causal} functions.

 The boundary values function of $f\in \mathbb{H}_p$, i.e.
$
f(e^{i\theta})=\lim\nolimits_{r\to 1}f(re^{i\theta}),
$
exists a.e., belongs to $L_p(\mathbb{T})$, and it uniquely determines the function itself. Therefore, functions from $\mathbb{H}_p$ can be identified with their boundary values and we can assume that $\mathbb{H}_p(\mathbb{D})\subset L_p(\mathbb{T})$. Moreover, there exists a natural characterization of $\mathbb{H}_p$, $p\geq 1$, in terms of the Fourier coefficients of boundary values:
$$
\mathbb{H}_p=\{f\in L_p\,|\, c_n\{f\}=0 \text{ for } n<0\},
\;\text{ where }\;
c_n\{f\}=\frac{1}{2\pi}\int\nolimits_0^{2\pi} f(e^{i\theta}) e^{-in\theta}\,d\theta, \;\;\;n\in\mathbb{Z}.
$$
(Whenever $f\in L_1(\mathbb{T})^{r\times r}$ is a matrix function, we assume that the Forier coefficients are matrices and denote by $C_n\{f\}\in \mathbb{C}^{r\times r}$.)

 A function $f\in \mathbb{H}_p$ is called {\em outer}, if
\begin{equation}\label{7.5}
    |f(0)|=\exp\left(\frac{1}{2\pi}\int\nolimits_0^{2\pi} \log |f(e^{i\theta})|\,d\theta\right)
\end{equation}
and a matrix function $F\in H_1^{r\times r}$ is called outer if its determinant is outer (see \cite{EL10}). 
The right-hand side of \eqref{7.5} is a maximal possible value of $|f(0)|$ in the class of functions $f$ from $\mathbb{H}_p$ with given absolute values on the boundary. In signal processing applications, such functions are sometimes also referred to as  {\em optimal} or {\em minimal phase}.

We use standard MATLAB notation for submatrices of a given matrix 
$A$, say $A(m,1:n)$ or $A(m_1:m_2,n_1:n_2)$, and `ifft' in formula \eqref{ifft} refers to the MATLAB command for the inverse Fourier transform.

\section{Formulation of matrix spectral factorization problem}

Let $S\in L_1(\mathbb{T})^{r\times r}$ be a positive definite (a.e.) matrix function
\begin{equation}\label{aSi}
S(t)=\begin{pmatrix} s_{11}(t)& s_{12}(t)& \cdots&s_{1r}(t)\\
s_{21}(t)& s_{22}(t)& \cdots&s_{2r}(t)\\
\vdots&\vdots&\vdots&\vdots\\s_{r1}(t)& s_{r2}(t)&
\cdots&s_{rr}(t)\end{pmatrix},
\end{equation}
which satisfies  the Paley-Wiener condition 
\begin{equation}\label{9.1}
\int\nolimits_\mathbb{T}\log |\det S(t)|\,dt >-\infty.
\end{equation}
Then, according to the matrix spectral factorization theorem proved by Wiener and Masani \cite{Wie57}, there exists a unique (up to a unitary constant matrix) outer matrix function $S_+\in H_2^{r\times r}$ such that 
\begin{equation}\label{9.2}
S(t)=S_+(t)S_+^H(t).
\end{equation}
The condition \eqref{9.1} is also necessary for the existence of factorization \eqref{9.2}.

In the scalar case, $r=1$, the spectral factor can be found explicitly  (\cite{Garn}, \cite{Koosis})
\begin{equation}\label{9.7}
S_+(z)=\exp\left(\frac{1}{4\pi}\int\nolimits_0^{2\pi} 
\frac{e^{i\theta}+z}{e^{i\theta}-z}
\log |S(e^{i\theta})|\,d\theta\right).
\end{equation}
and, therefore, it is assumed that the spectral factor can be constructed numerically. However, in the matrix case, no such explicit formulas exist, making the determination of a spectral factor $S_+$ for a given matrix function \eqref{aSi} a very demanding problem. 

\section{Introduced innovation}

We generalize the main system of boundary value conditions in Janashia-Lagvilava method (see \cite[Eq. (15)]{IEEE}) for matrix valued functions. Namely, let 
\begin{equation}\label{2.1}
\zeta_j\in \mathcal{P}^-_N(M),\;\; j=1,2,\ldots, m-1,    
\end{equation}
and consider the following system of $m$ conditions
\begin{equation}\label{2.2}
\begin{cases}x_m\cdot \zeta_1-\widetilde{x_1}\in\mathcal{P}^+(M),\\
             x_m \cdot\zeta_2-\widetilde{x_2}\in\mathcal{P}^+(M),\\
             \vdots\\
             x_m \cdot\zeta_{m-1}-\widetilde{x_{m-1}}\in\mathcal{P}^+(M),\\
             \zeta_1\cdot x_1+\zeta_2\cdot x_2+\ldots+\zeta_{m-1}\cdot x_{m-1}
              +\widetilde{x_m}\in\mathcal{P}^+(M),
         \end{cases}
\end{equation}
where $x_i\in \mathcal{P}^+_N(M)$, $i=1,2,\ldots,m$, are unknowns (we emphasize that the sign ``$\cdot$" above indicates matrix production). 

We say that a block vector function
\begin{equation*}
\mathbf{u}(z)=\big(u_1(z),u_2(z),\ldots,u_m(z)\big)^{T_b}, \text{ where } 
u_i\in \mathcal{P}_N^+(M), i=1,2,\ldots,m,
\end{equation*}
is a solution of \eqref{2.2} if and only if all the conditions in \eqref{2.2}
are satisfied by substitution $x_i(z)=u_i(z)$, $i=1,2,\ldots,m$.

\begin{lemma} $($cf. \cite[Lemma 2]{IEEE}$)$
Let \eqref{2.1}  holds and  let
$$
\mathbf{u}(z)=\big(u_1(z),u_2(z),\ldots,u_m(z)\big)^{T_b},\;
u_j\in\mathcal{P}_N^+(M),\, j=1,2,\ldots,m$$ and 
$$
\mathbf{v}(z)=\big(v_1(z),v_2(z),\ldots,v_m(z)\big)^{T_b},\;
v_j\in\mathcal{P}_N^+(M),\,  j=1,2,\ldots,m,
$$
be two $($possibly identical$)$ solutions of the system \eqref{2.2}.
Then
\begin{equation}\label{S5}
\sum_{i=1}^{m-1} \widetilde{u_i}(z) \cdot v_i(z)
+u_m(z)\cdot \widetilde{v_m}(z)=\operatorname{const}\in\mathbb{C}^{M\times M}.
\end{equation}
\end{lemma}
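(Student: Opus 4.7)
The plan is to show that the matrix Laurent polynomial on the left-hand side of \eqref{S5} lies in $\mathcal{P}^+(M) \cap \mathcal{P}^-(M)$, which by the equivalence stated in Section 2 forces it to be a constant in $\mathbb{C}^{M\times M}$. This is the same strategic idea as in the scalar proof in \cite{IEEE}, but I would carry it out carefully in the non-commutative matrix setting, keeping track of the order of multiplications.

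First I would show membership in $\mathcal{P}^+(M)$. Using the first $m-1$ conditions of \eqref{2.2} applied to $\mathbf{u}$, write $\widetilde{u_i} = u_m\cdot\zeta_i - p_i$ with $p_i \in \mathcal{P}^+(M)$. Left-multiplying by nothing and right-multiplying by $v_i$, sum over $i$ to get
\[
\sum_{i=1}^{m-1}\widetilde{u_i}\cdot v_i \;=\; u_m\cdot\Bigl(\sum_{i=1}^{m-1}\zeta_i\cdot v_i\Bigr)\;-\;\sum_{i=1}^{m-1}p_i\cdot v_i.
\]
Then apply the last condition in \eqref{2.2} to $\mathbf{v}$: $\sum_{i=1}^{m-1}\zeta_i\cdot v_i = -\widetilde{v_m} + q$ with $q\in\mathcal{P}^+(M)$. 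Substituting and adding $u_m\cdot\widetilde{v_m}$ cancels the $-u_m\cdot\widetilde{v_m}$ term, leaving
\[
\sum_{i=1}^{m-1}\widetilde{u_i}\cdot v_i + u_m\cdot\widetilde{v_m} \;=\; u_m\cdot q \;-\;\sum_{i=1}^{m-1} p_i\cdot v_i,
\]
a sum of products of elements of $\mathcal{P}^+(M)$, hence in $\mathcal{P}^+(M)$.

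Next I would show the expression also lies in $\mathcal{P}^-(M)$, and for this I would apply the adjoint operation $\widetilde{\;\cdot\;}$ and use the identities $\widetilde{AB}=\widetilde{B}\widetilde{A}$ and $\widetilde{\widetilde{P}}=P$ recalled after \eqref{1.3}. Taking the tilde of the left-hand side of \eqref{S5} gives $\sum_{i=1}^{m-1}\widetilde{v_i}\cdot u_i + v_m\cdot\widetilde{u_m}$, which is exactly the same expression with the roles of $\mathbf{u}$ and $\mathbf{v}$ swapped. By the argument of the previous paragraph (now with $\mathbf{v}$ in the role of $\mathbf{u}$ and vice versa), this swapped sum lies in $\mathcal{P}^+(M)$. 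Since $P\in\mathcal{P}^+(M) \Leftrightarrow \widetilde{P}\in\mathcal{P}^-(M)$, the original expression belongs to $\mathcal{P}^-(M)$.

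Combining both inclusions with $\mathcal{P}^+(M)\cap\mathcal{P}^-(M)=\mathbb{C}^{M\times M}$ gives \eqref{S5}. The only genuinely delicate point, and the one I would double-check, is the preservation of the order of matrix multiplications when taking $\widetilde{\;\cdot\;}$ in the second step; everything else is a bookkeeping exercise made possible by the structural symmetry built into the system \eqref{2.2}.
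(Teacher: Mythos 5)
Your proposal is correct and follows essentially the same route as the paper: right-multiply the first $m-1$ conditions (for $\mathbf{u}$) by $v_i$, left-multiply the last condition (for $\mathbf{v}$) by $u_m$, combine to get membership in $\mathcal{P}^+(M)$, then swap the roles of $\mathbf{u}$ and $\mathbf{v}$ and take adjoints to get membership in $\mathcal{P}^-(M)$. The order of the matrix multiplications and the adjoint identities are handled correctly, so nothing further is needed.
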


\begin{proof}
Substituting the matrix functions $u_i$ in the first $m-1$ conditions and
the matrix functions $v_i$ in the last condition of \eqref{2.2}, and then
multiplying on the right the first $m-1$ conditions by $v_i$ and the last
condition by $u_m$ on the left,  we get
$$
\begin{cases} u_m\cdot\zeta_1\cdot v_1-\widetilde{u_1}\cdot v_1\in \mathcal{P}^+(M),\\
             u_m\cdot \zeta_2\cdot v_2-\widetilde{u_2}\cdot v_2\in \mathcal{P}^+(M),\\
              \cdot\hskip+1cm \cdot\hskip+1cm \cdot\\
              u_m\cdot\zeta_{m-1}\cdot v_{m-1}-\widetilde{u_{m-1}}\cdot v_{m-1}\in \mathcal{P}^+(M),\\
             u_m\cdot \zeta_1\cdot v_1+u_m\cdot\zeta_2\cdot v_2+\ldots+u_m\cdot\zeta_{m-1}\cdot v_{m-1}
              +u_m\cdot\widetilde{v_m}\in \mathcal{P}^+(M).
         \end{cases}
$$
Subtracting the first $m-1$ conditions from the last condition in
the latter system, we get
\begin{equation}\label{3.3}
\sum_{i=1}^{m-1} \widetilde{u_i}(z) \cdot v_i(z)
+u_m(z)\cdot \widetilde{v_m}(z)\in \mathcal{P}^+(M).
\end{equation}
We can interchange the roles of $u$ and $v$ in the above
discussion to get in a similar manner that
$$
\sum_{i=1}^{m-1} \widetilde{v_i}(z) \cdot u_i(z)
+v_m(z)\cdot \widetilde{u_m}(z)\in \mathcal{P}^+(M),
$$
which in turn implies that 
\begin{equation}\label{3.4}
\sum_{i=1}^{m-1} \widetilde{u_i}(z) \cdot v_i(z)
+u_m(z)\cdot \widetilde{v_m}(z)\in \mathcal{P}^-(M).
\end{equation}
The relations \eqref{3.3} and \eqref{3.4} imply \eqref{S5}.
\end{proof}

The above lemma enables us to provide a constructive proof of the following theorem, which serves as a block matrix generalization of the main computational tool in the Janashia-Lagvilava method.

\begin{theorem}\label{Th1}
Let $M, m$, and $N$ be positive integers. For any $m\times m$ block matrix function $F(z)$ of the  form
\begin{equation}\label{S8}
F(z)=\begin{pmatrix}I_M&0_M&\cdots&0_M&0_M\\
          0_M&I_M&\cdots&0_M&0_M\\
           \vdots&\vdots&\vdots&\vdots&\vdots\\
           0_M&0_M&\cdots&I_M&0_M\\
           \zeta_{1}(z)&\zeta_{2}(z)&\cdots&\zeta_{m-1}(z)&f(z)
           \end{pmatrix},
\end{equation}
where
\begin{equation}\label{S9}
\zeta_j\in \mathcal{P}_N^-(M),\;j=1,2,\ldots, m-1,\text{ and }
f\in\mathcal{P}_N^+(M) \text{ with } \det C_0\{f\}\not=0,
\end{equation}
there exists a para-unitary matrix function $U$ of the form
  \begin{equation}\label{4.2}
U(z)=\begin{pmatrix}u_{11}(z)&u_{12}(z)&\cdots&u_{1m}(z)\\
                           \vdots&\vdots&\vdots&\vdots\\
           u_{m-1,1}(z)&u_{m-1,2}(z)&\cdots&u_{m-1,m}(z)\\[3mm]
           \widetilde{u_{m1}}(z)&\widetilde{u_{m2}}(z)&\cdots&\widetilde{u_{mm}}(z)\\
           \end{pmatrix},
\end{equation}
where
 \begin{equation*}
 u_{ij}(z)\in \mathcal{P}_N^+(M),\;\;i,j=1,2,\ldots,m,
\end{equation*}
with determinant 1, $\det U(z)=1$ for each $z\in\mathbb{C}\setminus\{0\}$,
 such that
  \begin{equation*}
F U\in \big(\mathcal{P}_N^+(M)\big)^{m\times m}.
\end{equation*}
\end{theorem}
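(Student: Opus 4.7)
Proof plan: The proof is constructive and follows the strategy of the scalar-case argument in \cite{IEEE}, adapted to the block matrix setting via Lemma 1. I build $U$ column by column, identifying each column with a solution of system \eqref{2.2} so that the Lemma furnishes para-unitarity.

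For each $k = 1, \ldots, m$, the $k$-th column is characterized by two requirements: the analyticity of the $(m,k)$-entry of $FU$, i.e.\ $\sum_{j<m}\zeta_j u_{jk} + f\widetilde{u_{mk}} \in \mathcal{P}^+_N(M)$, and block-orthogonality with the other columns in the sense of Lemma 1. Writing $u_{ij}(z) = \sum_{n=0}^N U^{(ij)}_n z^n$, the analyticity condition becomes $NM^2$ linear equations per column on the matrix Fourier coefficients $U^{(ij)}_n$. I reformulate this as the last equation of \eqref{2.2}: setting $x_j = u_{jk}$ for $j < m$ and choosing $x_m \in \mathcal{P}^+_N(M)$ so that $\widetilde{x_m}$ captures the non-analytic part of $f\widetilde{u_{mk}}$, the analyticity of $(FU)_{m,k}$ becomes exactly $\sum\zeta_j x_j + \widetilde{x_m} \in \mathcal{P}^+(M)$. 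Imposing the remaining first $m-1$ equations of \eqref{2.2} then turns each column into a bona fide solution of the full system. The invertibility of $C_0\{f\}$ enters precisely here, furnishing (via back-substitution on the negative-power coefficients, starting from $z^{-N}$) a linear bijection between $u_{mk} \in \mathcal{P}^+_N(M)$ and the corresponding $x_m \in \mathcal{P}^+_N(M)$.

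A dimension count (total unknowns $m(N+1)M^2$, total constraints $mNM^2$) shows that the solution space of \eqref{2.2} has complex dimension at least $mM^2$, and the invertibility of $C_0\{f\}$ makes this bound sharp. I then invoke Lemma 1: any two solutions $\mathbf{u}^{(j)}, \mathbf{u}^{(k)}$ pair to a constant block matrix $c_{jk} \in \mathbb{C}^{M\times M}$, and under the identification above these constants give the block entries of $\widetilde{U}U$. Performing a block Gram--Schmidt in the solution space with respect to this pairing yields $m$ solutions with $c_{jk} = \delta_{jk} I_M$, so the assembled matrix $U$ is para-unitary of the structured form \eqref{4.2}. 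Analyticity of $FU$ then holds by construction, and a final right-multiplication by a constant unitary matrix renormalizes $\det U$ to $1$ (a para-unitary matrix has $|\det U(z)| = 1$ on $\mathbb{T}$, hence $\det U$ is a unimodular monomial that can be so cancelled).

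The main obstacle is the orthonormalization step: one must verify that the Lemma 1 bilinear form is non-degenerate (indeed positive-definite in the appropriate block sense) on the solution space of \eqref{2.2}, so that the block Gram--Schmidt does not collapse, and that under the bijection $u_{mk} \leftrightarrow x_m$ induced by $C_0\{f\}^{-1}$ the Lemma pairing genuinely matches the para-unitarity form $\widetilde{U}U = I$. Both of these assertions, together with the dimension count and the solvability of the analyticity equations, rest squarely on the hypothesis $\det C_0\{f\} \neq 0$.
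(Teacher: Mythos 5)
Your overall strategy --- taking the columns of $U$ to be solutions of the system \eqref{2.2} and letting Lemma~1 supply the constancy of $\widetilde{U}U$ --- is exactly the paper's, but there is a genuine gap in how you handle a non-trivial $f$. The system \eqref{2.2} contains no $f$: its last condition is $\sum_j \zeta_j x_j + \widetilde{x_m}\in\mathcal{P}^+(M)$. You propose to encode the analyticity of $(FU)_{mk}=\sum_{j<m}\zeta_j u_{jk}+f\,\widetilde{u_{mk}}$ by replacing $u_{mk}$ with an auxiliary $x_m$ extracted from the anti-analytic part of $f\,\widetilde{u_{mk}}$ via back-substitution through $C_0\{f\}^{-1}$. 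But Lemma~1 applied to two such solutions then yields constancy of $\sum_{i<m}\widetilde{u_{ik}}\,u_{il}+x_m^{(k)}\,\widetilde{x_m^{(l)}}$, whereas para-unitarity of the matrix \eqref{4.2} requires constancy of $\sum_{i<m}\widetilde{u_{ik}}\,u_{il}+u_{mk}\,\widetilde{u_{ml}}$; these do not coincide unless $f$ is itself para-unitary, and nothing in your substitution repairs the discrepancy. You flag this as ``the main obstacle,'' but it is not a verification left to the reader --- it is the point where the argument as stated breaks. The paper sidesteps the problem at the outset by factoring $F$ into a block-diagonal matrix with $f$ in the corner, a matrix carrying the analytic parts $\zeta_j^{+}$ of the formal series $f^{-1}\zeta_j$, and a matrix of the shape \eqref{S8} with $I_M$ in the corner; the first two factors have no negative Fourier coefficients, so one may assume $f=I_M$ throughout, after which the column/Lemma-1 correspondence is exact. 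That reduction is also where the hypothesis $\det C_0\{f\}\ne 0$ actually enters (it makes $f^{-1}$ a well-defined formal power series), rather than in the role you assign it.

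Two smaller points. First, the non-degeneracy (indeed positivity) of the Lemma-1 pairing on the solution space, which your block Gram--Schmidt requires, is only asserted; the paper instead writes down the explicit block-Hankel linear system and, following the proof of Theorem~1 of \cite{IEEE}, produces $m$ distinguished solutions --- the $i$th one carrying the inhomogeneity $\mathbb{I}$ in the $i$th equation --- for which invertibility of the resulting constant matrix comes out of the construction. Second, your determinant normalization is not correct as written: $\det U$ is a Laurent polynomial of modulus one on $\mathbb{T}$, hence of the form $cz^{k}$, and a constant unitary right factor can only remove it when $k=0$; you would still need to prove $k=0$.
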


\begin{remark}
    In this paper, we apply this theorem specifically for $ m=2 $. However, the general formulation may prove useful for potential modifications of the proposed algorithm.
\end{remark}

\begin{proof}
    Note that $F$ can be formally represented as 
$$
    F=\begin{pmatrix}I_M&0_M&\cdots&0_M&0_M\\
          0_M&I_M&\cdots&0_M&0_M\\
           \vdots&\vdots&\vdots&\vdots&\vdots\\
           0_M&0_M&\cdots&I_M&0_M\\
        0_M&0_M&\cdots&0_M&f
           \end{pmatrix}
    \begin{pmatrix}I_M&0_M&\cdots&0_M&0_M\\
          0_M&I_M&\cdots&0_M&0_M\\
           \vdots&\vdots&\vdots&\vdots&\vdots\\
           0_M&0_M&\cdots&I_M&0_M\\
           f^{-1}\zeta_{1}&f^{-1}\zeta_{2}&\cdots&f^{-1}\zeta_{m-1}&I_M
           \end{pmatrix},
$$
where, under $f^{-1}$ we understand a formal power series with matrix coefficients $f^{-1}(z)=\sum_{n=0}^\infty A_nz^n$ such that $f^{-1}(z)f(z)=I_M$. Furthermore, each $f^{-1}\zeta_j$, $j=1,2,\ldots,m-1$, can be decomposed as a sum  $f^{-1}\zeta_j=(f^{-1}\zeta_j-\zeta_j^-)+\zeta_j^-$, where $\zeta_j^-\in \mathcal{P}_N^-(M)$ and $C_n\{f^{-1}\zeta_j-\zeta_j^-\}=0$ for every $n<0$. Thus, if we introduce the notation $f^{-1}\zeta_j-\zeta_j^-=:\zeta_j^+$, we have the following decomposition of $F$:
$$
F=\begin{pmatrix}I_M&\cdots&0_M&0_M\\
           \vdots&\vdots&\vdots&\vdots\\
           0_M&\cdots&I_M&0_M\\
        0_M&\cdots&0_M&f
           \end{pmatrix}
   \begin{pmatrix}I_M&\cdots&0_M&0_M\\
             \vdots&\vdots&\vdots&\vdots\\
           0_M&\cdots&I_M&0_M\\
           \zeta_{1}^+&\cdots&\zeta_{m-1}^+&I_M
           \end{pmatrix}
            \begin{pmatrix}I_M&\cdots&0_M&0_M\\
             \vdots&\vdots&\vdots&\vdots\\
           0_M&\cdots&I_M&0_M\\
           \zeta_{1}^-&\cdots&\zeta_{m-1}^-&I_M
           \end{pmatrix},
$$
where the negative-indexed Fourier coefficients of each entry from the first two matrices in the above product vanish. Hence, we can assume throughout the proof below, without loss of generality, that $f(z)=I_M$ in \eqref{S8}.

For a given block matrix functions $\zeta_j$ in \eqref{S9}, we construct $m$ independent solutions of the system \eqref{2.2}, which will be $m$ column blocks of the matrix \eqref{4.2}. Let 
$$
\zeta_j(t)=
 \sum_{n=0}^N\gamma_{jn}z^{-n}, \text{ where } \gamma_{jn}\in \mathbb{C}^{M\times M}, \;n=0,1,\ldots,N;\,j=1,2,\ldots,m-1,
$$
and consider $(N+1)\times(N+1)$ block-Hankel matrices
$$
\Gamma_j=\begin{pmatrix}\gamma_{j0}&\gamma_{j1}&\gamma_{j2}&\cdots&\gamma_{j,N-1}&\gamma_{jN}\\
        \gamma_{j1}&\gamma_{j2}&\gamma_{j3}&\cdots&\gamma_{jN}&0_M\\
        \gamma_{j2}&\gamma_{j3}&\gamma_{j4}&\cdots&0_M&0_M\\
        \cdot&\cdot&\cdot&\cdots&\cdot&\cdot\\
        \gamma_{jN}&0_M&0_M&\cdots&0_M&0_M\end{pmatrix},\; j=1,2,\ldots, m-1.
$$
We search solutions of the system \eqref{2.2} in the form 
$$
x_i^+(z)=\sum_{n=0}^N a_{in} z^n, \text{ where } a_{in}\in\mathbb{C}^{M\times M},\;\;n=0,1,\ldots,N; \, i=1,2,\ldots, m,
$$
and introduce the following block rows
$$
X_i=(a_{i0},a_{i1},\ldots,a_{iN})\in\mathbb{C}^{M\times M(N+1)},\;\;i=1,2,\ldots,m,
$$
Note that (see \eqref{S0})
$$
X_i^H=(a^{H}_{i0},a^{H}_{i1},\ldots,a^{H}_{iN})^{T_b} \text{ and }
\big(X_i^{H_b}\big)^{T_b}=(a^{H}_{i0},a^{H}_{i1},\ldots,a^{H}_{iN})=\big(X_i^{T_b}\big)^H.
$$
We equate the block matrix coefficients of the negative powers of polynomials in \eqref{2.2} to zero, resulting in the following system of linear algebraic equations with block matrix coefficients:
$$
\begin{cases}
    X_m\cdot \Gamma_1-\big(X_1^{T_b}\big)^H=\mathbb{O}\\
    \vdots\\
    X_m\cdot \Gamma_i-\big(X_i^{T_b}\big)^H=\mathbb{I}\\
    \vdots\\
    X_m\cdot \Gamma_{m-1}-\big(X_{m-1}^{T_b}\big)^H=\mathbb{O}\\
    \Gamma_1\cdot X_1^{T_b}+\Gamma_2\cdot X_2^{T_b}+\ldots+
    \Gamma_{m-1}\cdot X_{m-1}^{T_b}+X_m^H=\mathbb{O}^{T_b},    
\end{cases}
$$
where 
$$
\mathbb{O}=(0_M,0_M,\ldots,0_M)\in\mathbf{C}^{M\times M(N+1)}\;\text{ and }
\mathbb{I}=(I_M,0_M,\ldots,0_M)\in\mathbf{C}^{M\times M(N+1)}.
$$
If we take Hermitian conjugate of the first $m-1$ equations in the above system, we get
$$
\begin{cases}
     \Gamma_1 \cdot  X_m^H-X_1^{T_b}=\mathbb{O}^{T_b}\\
    \vdots\\
    \Gamma_i \cdot  X_m^H-X_i^{T_b}=\mathbb{I}^{T_b}\\
    \vdots\\
    \Gamma_{m-1} \cdot  X_m^H-X_{m-1}^{T_b}=\mathbb{O}^{T_b}\\
    \Gamma_1\cdot X_1^{T_b}+\Gamma_2\cdot X_2^{T_b}+\ldots+
    \Gamma_{m-1}\cdot X_{m-1}^{T_b}+X_m^H=\mathbb{O}^{T_b}.    
\end{cases}
$$
Next, we can directly follow the approach outlined in the proof of Theorem 1 in [1], disregarding the distinction between block matrices and numerical coefficients. Therefore, we do not repeat the proof here.
\end{proof}

\smallskip

We now proceed to compare the old and new matrix spectral factorization algorithms, with input $S$, a positive-definite $ r \times r $ matrix function (on the unit circle), and output $ S_+ $, its approximate spectral factor.

\section{ A general description of the old algorithm proposed in \cite{IEEE}, \cite{IEEE-2018}}

{\bf Procedure 1.} Lower-upper triangular factorization of $S$,
$$
S=Q\cdot Q^H,
$$
with corresponding scalar spectral factors on the main diagonal of $Q$. This can be achieved by applying Cholesky factorization pointwise at each frequency resolution node, followed by performing scalar spectral factorization of the diagonal entries according to formula \eqref{9.7}.

{\bf Procedure 2.} We represent $S_+$ as a product
$$
S_+=Q\mathbf{U}_2 \mathbf{U}_3\cdots \mathbf{U}_r.
$$
To this end, we perform the following recurrent steps, which make the leading $ m \times m $ principal submatrices of $ Q $ analytic in turn, for $ m = 2, 3, \ldots, r $.

{\bf Step 1.} Take the first $m$ (nonzero) entries in the $m$th row of the 
$$
Q_{m-1}:=Q\mathbf{U}_2 \mathbf{U}_3\cdots \mathbf{U}_{m-1}
$$
(which is assumed to be constructed in the previous recurrent step), namely,
$$
(\zeta_1, \zeta_2,\ldots,\zeta_{m-1}, f_m),
$$
and determine large $N$ for which all the Fourier coefficients $c_n\{\zeta_j\}$, $n<-N$, $j=1,2,\ldots,m-1$, are very small with absolute value.

{\bf Step 2.} Create a $m\times m$ matrix function
$$
F=\begin{pmatrix}1&0&\cdots&0&0\\
          0&1&\cdots&0&0\\
           \vdots&\vdots&\vdots&\vdots&\vdots\\
           0&0&\cdots&1&0\\
           \zeta^{\{N\}}_{1}&\zeta^{\{N\}}_{2}&\cdots&\zeta^{\{N\}}_{m-1}&f_m
           \end{pmatrix},\;\text{ where } \zeta^{\{N\}}_j(z)=\sum_{n=-N}^{-1}c_n\{\zeta_j\}z^n,
$$
and, using \cite[Theorem 2]{IEEE}, construct a unitary (on the unit circle) matrix function $U$ of special structure \eqref{4.2}, where $u_{ij}\in \mathcal{P}^+_N=\mathcal{P}^+_N(1)$, with determinant 1, such that 
$$
FU\in \big(\mathcal{P}^+\big)^{m\times m}.
$$
{\bf Step 3.} Multiply the first $m$ columns of $Q_{m-1}$ by $U$, which produces $Q_m$.

\smallskip

For $m=r$, we have $S_+=Q_r$.

\smallskip

\section{ A general description of the new algorithm}

Without loss of generality, we assume that $r=2^p$. If this is not the case, we can add ones to the diagonal of $S$ thereby artificially extending its dimension. The algorithm can be suitably modified if $r$, or a slightly larger number than 
$r$, follows a different simple factorization pattern, by employing an alternative strategy for block creation and parallelization than the one presented below.

The first step in the new algorithm is identical to that in the old one. 

{\bf Procedure 2.}  We represent $S_+$ as a product
$$
S_+=Q\mathbf{U}_2 \mathbf{U}_4\cdots \mathbf{U}_{2^m}\mathbf{U}_r.
$$
To this end, we perform the following recurrent steps, which successively make the block diagonal entries of size $2^m \times 2^m$ analytic for $m = 2, 3, \ldots, p$.

Assume that
$$
Q_{m-1}:=Q\mathbf{U}_2 \mathbf{U}_4\cdots \mathbf{U}_{2^{m-1}}
$$
is already constructed, which has the following properties:

(i) block diagonal entries $Q_{m-1}[2^m(k-1)+1:2^mk\, , \, 2^m(k-1)+1:2^mk]$, for $k=1,2,\ldots 2^{p-m}$, are analytic; 

(ii) upper off-block diagonal entries are zero;

(iii) lower off-block diagonal entries can be non-analytic;

and from $k=1$ to $k=2^{p-m-1}$ perform the following steps, which can be executed in parallel:

{\bf Step 1.} Label the non-analytic $2^m\times 2^m$ matrix function 
$$Q_{m-1}[2^m(2k-1)+1:2^{m+1}k\, , \, 2^m(2k-2)+1:2^m(2k-1)]$$
 and the block diagonal entry 
$$Q_{m-1}[2^m(2k-1)+1:2^{m+1}k\, , \, 2^m(2k-1)+1:2^{m+1}k]$$ 
 as $\zeta$ and $f$, respectively. Determine a sufficiently large 
$N$ such that all block Fourier coefficients 
$C_n\{\zeta\}$, for $n<-N$ have entries with very small absolute values.

{\bf Step 2.} Create a block matrix function
$$
F=\begin{pmatrix}
    I_{2^m} &  0_{2^m}\\
    \zeta_N & f_N
\end{pmatrix},
$$
where
$$
\zeta_N(z)=\sum_{n=-N}^{-1} C_n\{\zeta\} z^n\in\mathcal{P}^-_N(2^m)
\;\;\text{ and } \;\;
f_N(z)=\sum_{n=0}^{N} C_n\{f\} z^n\in\mathcal{P}^+_N(2^m),
$$
and, using Theorem \ref{Th1}, construct a para-unitary matrix $U$ of a special block matrix structure 
$$
U(z)=\begin{pmatrix}
    u_{11}(z)& u_{12}(z)\\
    \widetilde{u_{21}}(z)& \widetilde{u_{22}}(z)
\end{pmatrix},  \;\;\text{ where } u_{ij}\in \mathcal{P}^+_N(2^m),
$$
with $\det U(z)=1$, such that 
$$
F\cdot U\in \big(\mathcal{P}^+_N(2^m)\big)^{2\times 2}.
$$

{\bf Step 3}. Multiply nonzero entries of the two block columns of $Q_{m-1}$ containing $\zeta$ and $f$, namely $Q_{m-1}[2^m(2k-2)+1:2^p\, , \, 2^m(2k-2)+1:2^{m+1}k]$ by $U$.

After performing these steps for all $k$, the next block matrix $Q_m$ is ready.

For $m=p$, we have $S_+=Q_p$.

\section{Numerical simulations}

To compare the new and old algorithms, we investigated the performance of the corresponding MATLAB codes on the same dataset. A random positive definite $1024\times 1024$ matrix function was constructed as follows: we took
$$
S_n(t)=\sum_{k=0}^nA_nt^n,
$$
where $n=10$, and $A_k\in\mathbb{C}^{1024\times 1024}$, $k=0,1,\ldots,10$, are matrix coefficients with entries selected randomly from the uniform distribution on $[-1,1]$, and let
\begin{equation}\label{Sgam}
S(t)=\sum\nolimits_{k=0}^nA_nt^n \sum\nolimits_{k=0}^nA_n^Ht^{-n}.
\end{equation}
Then, the values of the matrix function  $S$ were computed at the points $z_j=\exp(2\pi ij/N)$, $j=0,1,\ldots,N-1$, where $N=2^9=512$; that is, the data was artificially generated with a frequency resolution of $2^p$  points, where $p=9$. All computations for both the old and new algorithms, as described above, were performed at these resolutions, and the spectral factor 
$S_+$ was obtained for the same points $z_j=\exp(2\pi ij/N)$, $j=0,1,\ldots,N-1$. Since the correct values of 
$S_+$ are unknown for the random matrix in equation \eqref{Sgam}, we require criteria to estimate the accuracy of the results. To this end, we introduce
$$
C_1=\max\nolimits_{j=0,1,\ldots,N-1}|S(z_j)-S_+(z_j)S_+^H(z_j)|_\infty
$$
and
\begin{equation}\label{ifft}
C_2=\max\nolimits_{j=N/2+1,\ldots,N-1}|{\rm ifft}(S_+)(z_j)|_\infty
\end{equation}
The closeness of $C_2$ to zero indicates that $S_+$ is `approximately causal'. The structure of the algorithm guarantees that $\det(S_+(z))\not=0$ for $|z|<1$ within the accuracy of the round-off errors in the scalar spectral factorization, so we do not test the result based on this criterion. 

For both algorithms, we obtained nearly identical values for 
$C_1$ and $C_2$:
$$
C_1<10^{-11}\;\;\;\text{ and }\;\;\;C_2<10^{-2}
$$
However, the running times for the old and new algorithms, $T_{old}$ and $T_{new}$, were significantly different:
$$
T_{old}\approx 2.5 \text{ hours} \;\;\;\text{ and }
T_{new}\approx 5 \text{min}.
$$
We emphasize that this occurred without leveraging the opportunity to parallelize Steps 1, 2, and 3 in Procedure 2 of the new algorithm.

\subsection*{Acknoledgmets}
Lasha Ephremidze is grateful to the University of Electronic Science and Technology of China for excellent working conditions and hospitality during his stay to Chengdu.

\subsection*{Author Contribution.} This work began with a working visit by LE to the University of Electronic Science and Technology, driven by his decades of expertise in the algorithm, which proved highly fruitful and enabled intense brainstorming that significantly shaped the research. The authors contributed collaboratively to the project: YW conceived the idea of parallelization, optimized its implementation, conducted numerical simulations to test various approaches, and developed the final MATLAB code; LE provided an in-depth understanding of the previous method and its generalizations; RGR offered valuable insights into the existing method and introduced innovative ideas for incorporating randomization; and PAVS initiated the project, provided overall guidance, and motivated the team to pursue algorithmic enhancements. YW and LE are co-first authors, contributing equally to this work, while LE and PAVS served as co-corresponding authors.


\def\cprime{$'$}
\providecommand{\bysame}{\leavevmode\hbox to3em{\hrulefill}\thinspace}
\providecommand{\MR}{\relax\ifhmode\unskip\space\fi MR }
\providecommand{\MRhref}[2]{%
  \href{http://www.ams.org/mathscinet-getitem?mr=#1}{#2}
}
\providecommand{\href}[2]{#2}

\end{document}